\newtheorem{Th}{Theorem}[section]
\newtheorem{Pro}[Th]{Proposition}
\newtheorem{Co}[Th]{Corollary}
\newtheorem{Ep}[Th]{Example}
\newtheorem{Df}[Th]{Definition}
\newtheorem{rk}[Th]{Remark}
\newtheorem{Qs}[Th]{Question}
\theoremstyle{definition}
\newtheorem{definition}[Th]{Definition}
\newtheorem{example}[Th]{Example}
\numberwithin{equation}{section}
\newcommand{\F}{\mathbb{F}}
\newcommand{\N}{\mathbb{N}}
\newcommand{\Q}{\mathbb{Q}}
\newcommand{\R}{\mathbb{R}}
\newcommand{\Z}{\mathbb{Z}}
\DeclareMathOperator{\ord}{ord}
\providecommand\ldb{\llbracket}
\providecommand\rdb{\rrbracket}
\keywords{atomic domain, idf-domain, Furstenberg domain, atomicity, sub-atomic domain}
\subjclass[2020]{Primary 13A05; Secondary 13F15, 13G05}
\begin{document}

	\title{A Weaker Notion of Atomicity in Integral Domains}
	
	\author{Mohamed Benelmekki}
	\address{Department of Mathematics, Facult\'e des Sciences et Techniques, 
Beni Mellal University, P.O. Box 523, Beni Mellal, Morocco}
	\email{med.benelmekki@gmail.com}
	
	\author{Brahim Boulayat}
	\address{Department of Mathematics and Computer Science, FPK, 
Sultan Moulay Slimane University, P.O. Box 145, Khouribga, Morocco}
	\email{boulayat.bra@gmail.com}
	
\date{\today}

\begin{abstract}
	In classical factorization theory, an integral domain is called \emph{atomic} if every nonzero nonunit element can be written as a finite product of irreducible elements. Here, we introduce and study a weaker notion of atomicity, which relaxes the requirement that all elements admit a factorization into irreducibles. Namely, we say that an integral domain is  \emph{sub-atomic} if every nonunit divisor of an atomic element is also atomic. We further consider several factorization properties associated with this notion. Then, we investigate the basic properties of such domains, provide examples, and explore the behavior of the sub-atomic property under standard constructions such as localization, polynomial rings, and $D+M$ constructions. Our results highlight the independence of the sub-atomic property from other classical factorization properties and introduce an important class of integral domains that lies between atomic and non-atomic domains.
\end{abstract}
\medskip

\maketitle


\bigskip
\section{Introduction}

The classical theory of factorization in integral domains originates in the nineteenth-century work of Kummer, Dedekind, and Kronecker, whose efforts to restore unique factorization in algebraic number rings led to the creation of ideals and the foundational tools of modern commutative algebra. As the subject developed, attention naturally expanded from the uniqueness of factorization to 
the more basic question of the existence of factorizations. By the mid-twentieth 
century, the concepts of irreducible and prime elements had become standard, and 
the notion of an \emph{atomic domain}---a domain in which every nonunit factors 
into irreducibles---emerged as the fundamental baseline in factorization theory. The notion of atomic domain was introduced by Cohn in \cite{C68}. It is well known that every Noetherian domain is atomic. Indeed, the Noetherian property implies the ascending chain condition on principal ideals (ACCP), and any domain satisfying ACCP is atomic; see \cite{AAZ90} for further details.

 However, it soon became clear that atomicity is far too restrictive for many 
natural classes of non-Noetherian domains. Valuation domains with dense value groups, integer-valued polynomial rings, and an array of pullback constructions often contain nonunit elements that admit no factorization into irreducibles. Throughout the 1980s--1990s, work by Anderson, Anderson, Zafrullah, and others systematically 
demonstrated that large families of integral domains lie strictly below atomicity, 
yet still exhibit meaningful factorization-like behavior (see \cite{AAZ90,AAZ92}). This recognition led to the introduction of several weaker notions of atomicity, designed to measure how far a domain deviates from the classical atomic condition while still retaining some structural control via irreducibles.

Among these weaker notions, three concepts have become central: \emph{near-atomic}, \emph{almost-atomic}, and \emph{quasi-atomic} domains. These properties were introduced to capture progressively weaker factorization conditions in the non-atomic setting, and they now constitute an active area of research in factorization theory. The earliest of these notions is the \emph{almost-atomic} domain, introduced by Boynton and Coykendall. An integral domain $D$ is almost-atomic if every nonzero nonunit can be made atomic by multiplication with finitely many irreducible elements \cite{BC15}. The notion of a \emph{near-atomic} domain was introduced by Lebowitz-Lockard in \cite{L19}. The  domain $D$ is called near-atomic if there is a fixed element $\beta\in D^*$ such that, for any nonzero nonunit $a\in D$, the product $ a\beta$ is an atomic element of $D$. In this sense, atomic behavior does not need to occur at $a$ itself, but emerges after multiplying $a$ by $\beta$. Near-atomicity provides a useful bridge between atomic and almost-atomic. The weakest of the three notions, and in some sense the most conceptually important, is the \emph{quasi-atomic domain}, developed in work of Boynton and Coykendall. A domain is quasi-atomic if every nonunit divides some atomic element \cite{BC15}. These notions form the following strictly descending chain:
\smallskip

\begin{center}
\begin{tikzcd}
\textbf{atomic} \arrow[r, Rightarrow, shift right=0.8ex] 
                     \arrow[r, red, Leftarrow, "/"{anchor=center,sloped}, shift left=0.8ex] & 
\textbf{near-atomic} \arrow[r, Rightarrow, shift right=0.8ex] 
                     \arrow[r, red, Leftarrow, "/"{anchor=center,sloped}, shift left=0.8ex] & 
\textbf{almost-atomic} \arrow[r, Rightarrow, shift right=0.8ex] 
                     \arrow[r, red, Leftarrow, "/"{anchor=center,sloped}, shift left=0.8ex] & 
\textbf{quasi-atomic},
\end{tikzcd}
\end{center}
where none of the implications can be reversed. A significant study addressing these three notions of atomicity is presented in \cite{L19}.
\bigskip

These weaker conditions do not require that any specific element factor into irreducibles, nor that such factorizations be unique; rather, they only require that every nonunit divides at least one atomic element of the domain. However, in a domain that is not a field, all these conditions---including atomicity---require the existence of at least one atom (irreducible element). In this paper, we propose a weaker notion of atomicity that does not require the existence of atoms, yet the atomic behavior can still appear in the divisibility structure, even when full atomicity is absent.
\begin{Df}  We say that an integral domain $D$ is a \emph{sub-atomic domain} if every nonunit divisor of an atomic element of $D$ is also atomic in $D$. 
\end{Df}

This definition provides a natural generalization of atomic domains, defining a class of integral domains that lies between atomic and non-atomic domains. For instance, an antimatter domain (that is, a domain without atoms) is obviously a sub-atomic domain. Thus, unlike classical atomic domains, these domains do not require every nonunit to factor into irreducibles, which enables the study of factorization behavior in a broader context. At the same time, they allow the transfer of atomic behavior from elements to their divisors. On the other hand, the usual definitions of factorization properties  in integral domains (such as UFD, HFD, FFD, and BFD) always assume that the domain is atomic \cite{AAZ90}. A natural question then arises: what happens if this assumption is removed or replaced by a weaker condition such as  sub-atomic? In this context, Coykendall and Zafrullah introduced the notion of an unrestricted unique factorization domain (U-UFD), referring to domains in which every atomic element admits a unique factorization \cite{CZ}. Every UFD is therefore a U-UFD, but the converse does not hold. An AP-domain (i.e., a domain in which every atom is prime) is also a U-UFD, see \cite[Lemma 2.2]{CZ}. More recently, in 2025, Du and Gotti introduced a related concept that does not require the domain to be atomic. They defined a domain to be an unrestricted finite factorization domain (U-FFD) if every atomic element admits only finitely many nonassociate factorizations \cite{DG}.\bigskip

The goal of this paper is to further study these concepts, which do not require the domain to be atomic. Along the way, we introduce several new notions of factorization properties, explore the connections between them, and provide examples. Let $D$ be an integral domain. Following P.L. Clark \cite{Cl17}, we say that $D$ is a \emph{Furstenberg} domain if every nonzero nonunit element of $D$ has at least an irreducible divisor in $D$. A nonzero element  $a$ of $D$ is called  \emph{Furstenberg} if $a$ has  at least an irreducible divisor in $D$.  We say that $D$ is a \emph{restricted-Furstenberg} domain  if every nonunit divisor of an atomic element of $D$ is Furstenberg. A Furstenberg domain is obviousely a restricted-Furstenberg domain. The domain $D$ is called an IDF-domain if every nonzero element of $D$ has only 
finitely many nonassociate irreducible divisors \cite{GW75}. We introduce an interesting generalization of IDF-domains, which we call \emph{restricted-irreducible-finite-divisor} domains (RIDF-domains).
 We say that $D$ is a  RIDF-domain if every atomic element of $D$ has only a finite number of nonassociate irreducible divisors in $D$.  The following diagram shows classes of domains/monoids defined by properties more general than atomic, with red arrows indicating implications that do not hold in the reverse direction.
		\begin{center}
	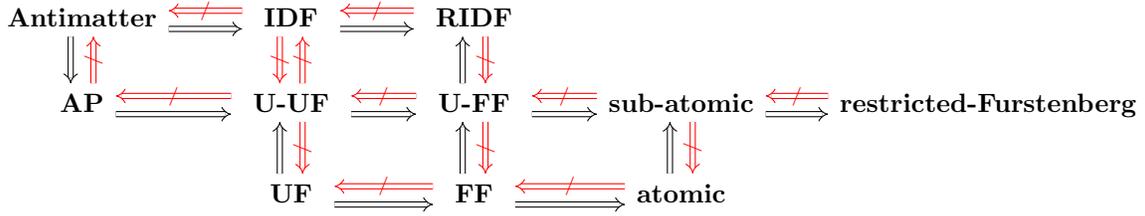
\begin{figure}[h]
	\captionsetup{name=Diagram}
\centering
		\begin{tikzcd} 
	\textbf{Antimatter} \arrow[r, Rightarrow, shift right=0.8ex] \arrow[red, r, Leftarrow, "/"{anchor=center,sloped}, shift left=0.8ex] 
		\arrow[d, Rightarrow, shift right=1ex] \arrow[red, d, Leftarrow, "/"{anchor=center,sloped}, shift left=1ex]	&	\textbf{ IDF }     \arrow[r, Rightarrow, shift right=0.8ex] \arrow[red, r, Leftarrow, "/"{anchor=center,sloped}, shift left=0.8ex] 
		\arrow[red,d, Rightarrow, "/"{anchor=center,sloped}, shift right=1ex] \arrow[red, d, Leftarrow, "/"{anchor=center,sloped}, shift left=1ex]
			& \textbf{ RIDF }   
			 \arrow[d,Leftarrow , shift right=1ex] \arrow[red, d,Rightarrow , "/"{anchor=center,sloped}, shift left=1ex]&     \\
		\textbf{AP} \arrow[r, Rightarrow, shift right=0.8ex] \arrow[red, r, Leftarrow, "/"{anchor=center,sloped}, shift left=0.8ex] 	& \textbf{ U-UF }     \arrow[r, Rightarrow, shift right=0.8ex] \arrow[red, r, Leftarrow, "/"{anchor=center,sloped}, shift left=0.8ex]\arrow[d,Leftarrow , shift right=1ex] \arrow[red, d,Rightarrow , "/"{anchor=center,sloped}, shift left=1ex] 
			& \textbf{ U-FF }    \arrow[r, Rightarrow, shift right=0.8ex] \arrow[red, r, Leftarrow, "/"{anchor=center,sloped}, shift left=0.8ex]  \arrow[d,Leftarrow , shift right=1ex] \arrow[red, d,Rightarrow , "/"{anchor=center,sloped}, shift left=1ex]
			& \textbf{sub-atomic}  \arrow[d,Leftarrow , shift right=1ex] \arrow[red, d,Rightarrow , "/"{anchor=center,sloped}, shift left=1ex] \arrow[r, Rightarrow, shift right=0.8ex] \arrow[red, r, Leftarrow, "/"{anchor=center,sloped}, shift left=0.8ex]  & \textbf{restricted-Furstenberg}\\			
		  &	\textbf{ UF }     \arrow[r, Rightarrow, shift right=0.8ex] \arrow[red, r, Leftarrow, "/"{anchor=center,sloped}, shift left=0.8ex] 
			& \textbf{ FF }    \arrow[r, Rightarrow, shift right=0.8ex] \arrow[red, r, Leftarrow, "/"{anchor=center,sloped}, shift left=0.8ex] 
			& \textbf{atomic}
		\end{tikzcd}
		\small\caption{Implications between the discussed factorization properties.}
\label{d1}
	\end{figure}
\end{center}
Examples demonstrating the failure of the converse implications in Diagram~\ref{d1} are provided in later sections. The remainder of the paper is organized as follows. In Section~\ref{sec2}, we recall the necessary background and set up the notation needed for our study. We review several definitions and results on factorization properties in monoids and integral domains, and we present monoid domains as a central tool for constructing examples and counterexamples. In Section~\ref{sec3}, we introduce sub-atomic domains and RIDF-domains, and characterize the U-FF property. We also investigate how the sub-atomic (resp., RIDF) property behaves under localization and the $D+M$ construction. Section~\ref{sec4} is devoted to the study of the ascent of the sub-atomic (resp., RIDF, U-FF) property to polynomial extensions.

	\section{Notation and Background} \label{sec2}
			In this section we recall some terminology, and fix the notations that we need in this paper.\\
			We note by $\N$, $\Z$, $\Q$, and $\R$ the sets of positive integers, integers, rational numbers, and real numbers, respectively. We let $\Z_+$ and $\Q_+$ denote the sets of nonnegative integers and rational numbers, respectively. Throughout this paper, all monoids are assumed to be commutative, cancellative, torsion-free semigroups with identity.

Let $M$ be a monoid, written multiplicatively. An element $b \in M$ is \emph{unit} if there exists $c \in M$ such that $b c = 1$. We let $U(M)$ denote the group of unit elements of $M$, and we say that $M$ is reduced if $U(M)$ is trivial. When $M$ is the multiplicative monoid of an integral domain $D$, the unit elements of $M$ are precisely the units of $D$. A nonempty subset $N\subseteq  M$ is a submonoid of $M$ if it is closed under multiplication and contains the identity element of $M$. Let $S\subseteq  M$,  a submonoid of $M$ generated by $S$, denoted by $\langle S\rangle$, is the intersection of all the submonoids of $M$ containing $S$. \smallskip
			
			Let $a,b\in M$, we say that $b$ is divided by $a$ or $a$ divides $b$ if there exists $c\in M$ such that $b=ac$; in this case we can write $a|_Mb$. We say that $a$ and $b$ are associate if  $a|_M b$ and $b|_Ma$; we write $a\sim b$. A submonoid $N$ of $M$ is said to be \emph{divisor-closed} if for each $a\in N$, every divisor of $a$ in $M$ lies in $N$. A nonunit element $a\in M$ is called atom (or irreducible element) if $a=bc$ for some $b,c\in M$  implies that $b\in U(M)$ or $c\in U(M)$. We will note the set of atoms of $M$ by $\mathcal{A}(M)$. \smallskip
			
			Following Cohn \cite{C68}, we say that a monoid  $M$ is \emph{atomic} if every nonunit $a\in M$ can be expressed as a finite product of atoms in $M$; i.e.,  $a=a_1a_2\cdots a_n$, where each $a_i\in M$ irreducible. An atomic monoid $M$ is called a \emph{BFM} if for every nonzero nonunit element $a \in M$ there exists 
$n_0 \in \mathbb{N}$ such that $n \leqslant n_0$ for every atomic factorization of $a$ in $M$; that is, the 
length of any atomic factorization of $a$ in $M$ is bounded by $n_0$. A nonunit element of $M$ is called \emph{atomic} if it is a finite product of atoms. The set of atomic elements of $M$ will noted by $A(M)$. Note that $A(M)$ is a submonoid of $M$, and $M$ is an atomic monoid when $A(M)=M$. \smallskip
			
A monoid $M$ is called an IDF-monoid (or has the IDF-property) if every nonzero element of $M$ has only finitely many nonassociate irreducible divisors. Note that a monoid with no atoms (i.e., an antimatter monoid) is vacuously an IDF-monoid. We say that $M$ is a finite-factorization-monoid (FFM) if every element of $M$ has only finitely many nonassociate factorizations in $M$. \smallskip
			
Recently, several researchers have studied these classical factorization properties without assuming that the base monoid is atomic. We give the following definition:
			\begin{Df}$ $
				\begin{itemize}
					\item A monoid $M$ is called an unrestricted unique factorization monoid (U-UFM) or has the U-UF property if every atomic element of $M$ admits a unique factorization \cite{CZ}.
					\item A monoid $M$ is called an unrestricted finite factorization monoid (U-FFM) or has the U-FF property if every atomic element of $M$ has only finitely many nonassociate factorizations \cite{DG}.
				\end{itemize}
				
			\end{Df}
			
			Let $D$ be an integral domain and let $D^*=D\backslash \{0\}$. Note that $D^*$ is a monoid under multiplication, called the multiplicative monoid of $D$.
			An integral domain $D$ is called atomic (resp., IDF, FFD, BFD, U-FFD, U-UFD)  if its multiplicative monoid $D^*$ is an atomic (resp., IDF, FF, BF, U-FF, U-UF) monoid. 
				
\begin{Df} We say that a monoid $M$ is a \emph{sub-atomic monoid}  if every nonunit divisor of an atomic element of $M$ is atomic (i.e., for every atomic  element $a$ of $M$, if $a=bc$ for some $b \in M \setminus U(M)$ and $c\in M$, then  $b$ is atomic). We say that an integral domain $D$ is a \emph{sub-atomic domain}  if its multiplicative monoid is sub-atomic.
			\end{Df}
		It is clear that an atomic domain/monoid is sub-atomic. Also,	we have the following immediate implications between these properties (see Proposition \ref{P1dd}).
			\begin{center}
\begin{tikzcd}
\textbf{U-UF} \arrow[r, Rightarrow] 
      & 
\textbf{U-FF} \arrow[r, Rightarrow] 
                     & 
\textbf{sub-atomic}
\end{tikzcd}
\end{center}\smallskip
			
Let $D$ be an integral domain. Let $f \in D[X]$ be a nonzero polynomial, and let $A_f$ be the ideal of $D$
generated by the coefficients of $f$. We say that $f$ is \emph{primitive}
(resp., \emph{super-primitive}) if the only common divisors of its coefficients
are the units of $D$ (resp., $A_f^{-1}=D$). Following \cite{AS75}, we say that:
			\begin{itemize}
				\item[$\bullet$] $D$ is a GL-domain if the product of two primitive polynomials in $D$ remains a primitive polynomial.
				\item[$\bullet$] $D$ is a PSP-domain if every super-primitive polynomial over $D$ is primitive.
			\end{itemize}	
			Let $S$ be a nonempty subset of $D$, recall that a common divisor $d\in D$ of $S$ is called \emph{maximal common divisor} (MCD) if for every common divisor $c\in D$ of $S$, if $c\mid _D d$ then $c$ is associate to $d$. The domain $D$ is called an MCD-domain if every nonempty subset of $D$ has at least one maximal common divisor \cite{Rot}. Following \cite{EK}, we say that $D$ is an  MCD-finite domain if every nonempty finite subset of nonzero elements of D has only a finite number of maximal common divisors (possibly zero).\smallskip

			We end this section by recalling the definition of the main tool used in this article.
			\begin{definition} Let $D$ be an integral domain, and let $X$ be an indeterminate over $D$. Let $M$ be a cancellative torsion-free monoid. The monoid domain of $M$ over $D$, denoted by $D[M]$, consists of all polynomial expressions with coefficients in $D$ and exponents in $M$.
				
					$$D[M]=\lbrace\sum_{i=1}^{n} a_iX^{s_i}; \;\; (a_i,s_i)\in D\times M\;\; \textit{for every}\;\; i=1,\ldots,n\rbrace .$$
					\end{definition}
Since the monoid $M$ is cancellative and torsion-free, it embeds in its quotient  group, which is a torsion-free commutative group. By a result of Levi~\cite{L13}, such a group admits a total order compatible with the group operation; hence $M$ can be equipped with a total ordering $\prec$ compatible with its monoid structure.
   Then every nonzero element $f\in D[M]$ is uniquely expressible as:
				$$f=a_1X^{s_1}+\cdots+a_n X^{s_n},$$	where $a_i \neq 0$ for every $i=1,\ldots,n$,  and $s_1\prec s_2\prec  \cdots\prec s_n$. In this case, $s_1$  is called the order of $f$ and denoted by $\ord f$. The element $s_n$ (resp., $a_n$) is called the degree (resp., the leading coefficient) of $f$ and denoted by $\deg f$ (resp., $lc(f)$). On the other hand,  $D[M]$ is an integral domain (commutative with identity) such that	$$U(D[M])=\{dX^{s},\;\; \textit{where}\;\; d\in U(D)\;\; and \;\; s\in U(M)\},$$
				see \cite[Corollary~4.2]{GP74}. We note that $D[M]$ is a GCD-domain if and only if $D$ is a GCD-domain and $M$ is a GCD-monoid \cite{GP74}. 
Monoid domains generalize polynomial rings. In particular,  $D[X]=D[M]$ for $M=\Z_+$. These domains have been studied extensively and have proven useful for constructing 
examples and counterexamples in various areas of commutative ring theory \cite{G72,GP74,K01}. 
			 \\

\section{Sub-atomic RIDF-domains}\label{sec3}

In this section, we investigate the sub-atomic and RIDF properties. We first introduce these notions and examine their relationship with the U-FF property. 
\smallskip

Recall that an integral domain $D$ is called a sub-atomic domain if every nonunit divisor of an atomic element of $D$ is atomic in $D$.  It is immediate that every atomic domain (resp.,  monoid) is sub-atomic. However, an antimatter domain is, by definition, sub-atomic, yet it is not atomic unless it is a field. For other examples of sub-atomic domains that fail to be atomic, see Examples~\ref{EP1}, \ref{EP2}, \ref{EP3}, and \ref{Ep4}. Another subclass of sub-atomic domains is the class of AP-domains, since every AP-domain is a U-UFD  by \cite[Lemma 2.2]{CZ}. On the other hand, a sub-atomic domain need not be a U-FFD. For instance, it is enough to consider an atomic domain that is not IDF,  see \cite[Examples~2.7(a) and~4.1]{AAZ90}.

\begin{Ep} \label{na}
	Consider the additive submonoid of $\Q_+$
\[
M=\{\,q\in \Q_+ : q\ge 1\,\}\ \cup\ \{0\}.
\]
Clearly, the monoid $M$ is a BFM, hence it is atomic. In particular, $M$ is a sub-atomic monoid. However, $M$ is not an IDF-monoid since $1+r$ divides $3$ in $M$ for every $r\in\Q_+$ with $0\leq r<1$.  Thus, $M$ is not a U-FFM. 
\end{Ep}
Although sub-atomic domains are not atomic in general, atomicity can be characterized in terms of sub-atomicity and quasi-atomicity.  The following proposition makes this precise.  Before stating it, recall that an integral domain $D$ is quasi-atomic if for every nonzero nonunit $b\in D$, there exists an element $a\in D$ such that $ab$ is atomic in $D$.
\begin{Pro} Let $D$ be an integral domain. Then $D$ is atomic if and only if $D$ is both  sub-atomic 	and quasi-atomic.
\end{Pro}
\begin{proof}
The direct implication is obvious. For the reverse implication, assume that $D$ is both sub-atomic and quasi-atomic. Let $x$ be an arbitrary nonzero nonunit of $D$. Since $D$ is quasi-atomic, there exists an element $a\in D$ such that $xa$ is atomic. Now $x$ is a nonunit divisor of the atomic element $x a$. By the sub-atomic property, $x$ must be atomic. Hence every nonzero nonunit of $D$ is atomic, which means $D$ is an atomic domain.
\end{proof}
Next we introduce a weaker notion of the IDF-property.

\begin{Df}
    We say that a monoid $M$ is a \emph{restricted-irreducible-divisor-finite monoid} (RIDF-monoid)  if every  atomic element of $M$ has only a finite number of nonassociate irreducible divisors. We say that an integral domain is a \emph{restricted-irreducible-divisor-finite domain} (RIDF-domain)  if its multiplicative monoid is a RIDF-monoid. \end{Df}
An IDF-domain is obviously a RIDF-domain. However, the converse is not true in general as the following example shows. 
\begin{Ep} \label{EP1} Consider a GCD-domain $D$ which is not an IDF-domain. For instance,  fix a prime number $p$ and let $G$ be the subgroup of the additive group of rational numbers $\Q$ generated by the following set $\lbrace \frac{1}{p^k}\; |\;k\in \mathbb{N}\rbrace$. Then, it follows from \cite[Theorem~5.2]{GP74} that the group algebra $D:=\Q[G]$ is a GCD-domain. On the other hand, $D$ is not an IDF-domain by \cite[Example~3.8]{BEJA}.\smallskip

Now, let us prove that $D$ is a RIDF-domain. Let $f$ be an atomic element of $D$, say $f=u\prod_{i=1}^n f_i^{e_i}$ where $u\in U(D)$, $e_i\in \N$, and the $f_i$'s are nonassociate irreducible elements of $D$. Since $D$ is a GCD-domain, each $f_i$ is  prime in $D$. Therefore, the elements $f_1,\ldots, f_n$ are the only nonassociate irreducible (prime) divisors of $f$ in $D$, and hence $f$ satisfies the RIDF-property. Consequently, $D$ is a RIDF-domain which is not an IDF-domain. Moreover, note that $D$ is a sub-atomic domain since every atomic element has a prime factorization in $D$, and consequently its divisors also admit such factorizations. Also, note that $D$ is not atomic since $X-1$ is not atomic in $D$.
\end{Ep}

In general, the RIDF-property doesn't implies  the sub-atomic (and hence the U-FF) property as the following example shows (see also Example \ref{EP3}). 
  
\begin{Ep} \label{g}
	Let $p$ and $q$ two prime numbers such that $p\neq q$. Consider the following Puiseux monoid
	\[
		M := \bigg\langle \frac {1}{q},  \frac 1{p^k} \ \Big{|} \ k \in \N \bigg\rangle.
	\]
	One can easily check that $\mathcal{A}(M) = \big\{ \frac{1}{q} \big\}$. Therefore $M$ is an IDF-monoid, and hence it is an RIDF-monoid. But $M$ is not a sub-atomic monoid. To see this, observe that $ 1=q \times\frac{1}{q}$,  and then $1$ has an atomic factorization in $M$. On the other hand, for every $k\in  \Z_+$ we have $1=p^k\times\frac 1 {p^k}$, and so $\frac 1{p^k}$ is a divisor of $1$ in $M$. However, $\frac 1 {p^k}$ has no irreducible divisor in $M$, for every $k\in  \Z_+$. In particular, $\frac 1{p^k}$ is not atomic in $M$, for every $k\in  \Z_+$. Therefore, $M$ is not a sub-atomic monoid. 
\end{Ep}

Note that there exist integral domains that fail to be sub-atomic and are not RIDF either, showing that these two properties are independent and neither implies the other. The following example illustrates this observation.
\begin{Ep}
	Let $X$ be an indeterminate over $\Q$. Consider the integral domain
	$$
	D := \Q \bigg[ \big\{  X^{2},X^{\frac{3}{2^k}}  : k \in \N \big\} \bigg].
	$$ 
Note that $D$ is isomorphic to the Puiseux algebra $\Q[M]$ where	$M := \langle 2,  \frac 3{2^k}  \ | \ k \in \N \rangle$. We claim that $D$ is not a sub-atomic domain. To see this, observe that  $\mathcal{A}(M)=\{2\}$ and then $X^2$ is an atom of $D$.  Since $X^6=\left( X^2\right)^3$, it follows that $X^6$ is atomic in $D$. However, $X^3$ is a divisor of $X^6$ in $D$ which is not atomic. Indeed,  since $1\notin M$ and $ \frac{3}{2^k}< 2$ for every $k\in \N$, $\gcd(X^2,X^3)=1$ (i.e., $X^2$ and $X^3$ share only units as common divisors in $D$) and hence $X^2$ cannot divide $X^3$ in $D$.    Moreover, $X^3=\left( X^{\frac{3}{2^k}}\right)^{2^k}$ such that $X^{\frac{3}{2^k}}$ has no irreducible divisor in $D$, for every $k\in \N$. Hence $D$ is not a sub-atomic domain.
	\smallskip
	
	On the other hand, $D$ is  not a RIDF-domain. To verify this,  let us first prove that the element $f:=X^6-1$ is atomic in $D$. We have $$f=\left(X^2-1\right)\left(X^4+X^2+1\right).$$
	Since $2$ is an atom of $M$, $X^2-1$ is irreducible in $D$. Note that $4$ has a unique factorization in $M$. Indeed, if $4=2n+\frac{3m}{2^k}$ for some $n,m,k\in \Z_+$ with $\gcd(2,m)=1$, then $2^{k+2}=2^{k+1}n+3m$. This implies that $m=0$ and $n=2$ since $2^{k+1}(2-n)=3m$ and $\gcd(2,m)=1$. It follows that every factorization of $X^4+X^2+1$ in $D$ is of length $N\leq 2$. In particular, $X^4+X^2+1$ is atomic in $D$, and thus $f$ is atomic in $D$.  Let us now show  that $f$  has infinitely many nonassociate irreducible divisors in $D$. Note that for every $n\in \Z_+$ we have $$f=\left(X^3-1\right)\left(X^3+1\right)=\left(X^{\frac{3}{2^n}}-1\right)\prod_{k=0}^n\left(X^{\frac{3}{2^k}}+1\right).$$
	To conclude we show that $\{X^{\frac{3}{2^n}}+1\}_{n\in \N}$ is an infinite set of nonassociate irreducible divisors of $f$ in $D$. Fix $n\in \N$. As previously denoted, $2$ does not divide $\frac{3}{2^n}$. Hence every divisor of $X^{\frac{3}{2^n}}+1$, in $R$, lies in $\Q[M']$ where $M' := \langle   \frac 3{2^k}  \ | \ k \in \N \rangle$. Let $G$ be the quotient group of $M'$, and set $h(T):=T+1\in \Q[T]$. Since $h(T^{2^d})=T^{2^d}+1=\Phi_{2^d}(T)$ is irreducible (the $2^d$-th cyclotomic polynomial) over $\Q$ for every $d\in \N$, it follows from \cite[Proposition 1]{BEGraz} that $h(X^{\frac{3}{2^n}})$ is irreducible in $\Q[G]$. Hence $X^{\frac{3}{2^n}}+1$ is irreducible in $\Q[G]$, and therefore it is irreducible in $D$. Consequently, $D$ is not a RIDF-domain. 
\end{Ep}

 
One can observe that the class of sub-atomic domains includes U-FFD's. The next result emphasizes this point and provides a natural analogue of \cite[Theorem 5.1]{AAZ90} in the non-atomic context. For the convenience of the reader, we include an adapted proof. 
\begin{Pro}\label{P1dd}
Let $D$ be an integral domain. Then $D$ is a U-FFD if and only if $D$ is a sub-atomic RIDF-domain.
\end{Pro}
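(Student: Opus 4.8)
The plan is to prove the two implications separately, following the template of \cite[Theorem 5.1]{AAZ90}, where finite-factorization domains are characterized among atomic domains as the domains that are simultaneously bounded-factorization and idf; here the roles of ``atomic'' and ``idf'' are played by ``completely atomic'' and ``RIDF''.

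For the implication ``completely atomic RIDF $\Rightarrow$ U-FFD'', fix an atomic element $a$ of $D$. By the RIDF hypothesis $a$ has only finitely many pairwise non-associate irreducible divisors, say $q_1,\dots,q_k$, and every irreducible occurring in a factorization of $a$ divides $a$, hence is associate to some $q_i$. Thus each factorization of $a$ is, after reordering and adjusting by units, of the form $q_1^{a_1}\cdots q_k^{a_k}$, and distinct factorizations of $a$ yield distinct exponent vectors $(a_1,\dots,a_k)\in\Z_+^k$. I would then observe that the set $T$ of exponent vectors that actually occur is an antichain for the componentwise order: if $(a_i)\leqslant(b_i)$ with both vectors in $T$, then cancelling $q_1^{a_1}\cdots q_k^{a_k}$ from the relation $q_1^{a_1}\cdots q_k^{a_k}\sim a\sim q_1^{b_1}\cdots q_k^{b_k}$ forces $q_1^{b_1-a_1}\cdots q_k^{b_k-a_k}\in U(D)$, which is impossible unless the two vectors coincide, because a nonempty product of the nonunits $q_i$ is a nonunit. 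By Dickson's lemma $T$ is finite, so $a$ has finitely many factorizations and $D$ is a U-FFD. (Complete atomicity enters here in the way \cite[Theorem 5.1]{AAZ90} uses the bounded-factorization condition: if $q_i^{m}\mid a$ for every $m$, then each quotient $a/q_i^{m}$ is an atomic nonunit, so expanding it into irreducibles produces exponent vectors for $a$ with unbounded $i$-th coordinate---forcing atomic elements to have bounded factorization length.)

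For the converse, suppose $D$ is a U-FFD. The step I expect to be the main obstacle is showing that $D$ is completely atomic. The strategy: let $a$ be atomic, $a=bc$ with $b$ a nonunit, and suppose $b$ is not atomic. A routine argument shows that a non-atomic nonunit must be divisible by a product of arbitrarily many nonunits (otherwise a partial factorization of $b$ into nonunits of maximal length would consist only of atoms, making $b$ atomic). Hence $a$ is divisible by a product of arbitrarily many nonunits, so the interval of principal ideals between $(a)$ and $D$ contains chains of every finite length. The delicate point---the heart of the proof---is to convert this into \emph{infinitely many factorizations} of the atomic element $a$, contradicting the U-FF property: since a nonunit of $D$ need not admit any irreducible divisor, one cannot simply refine a long chain of divisors of $a$ into a long chain of atoms, so a more careful argument is needed, working for instance inside the divisor-closed submonoid generated by the atoms of $D$ (whose atoms coincide with those of $D$) to extract genuinely distinct atomic factorizations of $a$.

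Once complete atomicity is in hand, the RIDF property follows quickly: if an atomic element $a$ had infinitely many non-associate irreducible divisors $q$, then for each such $q$ we may write $a=q\cdot(a/q)$ with $a/q$ atomic by complete atomicity, and expanding $a/q$ into irreducibles yields a factorization of $a$ in which $q$ occurs; since $a$ has only finitely many factorizations, only finitely many non-associate irreducibles can occur among them, a contradiction. Combining the two halves gives the stated equivalence.
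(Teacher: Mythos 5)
Your converse direction contains a genuine, self-acknowledged gap: you never actually prove that a U-FFD is completely atomic, stopping at ``a more careful argument is needed.'' The missing idea is to work with \emph{divisors} rather than chains. In a U-FFD an atomic element $a$ has only finitely many nonassociate divisors, hence so does any nonunit divisor $b$ of $a$; and a nonunit with only finitely many nonassociate divisors is automatically atomic --- among its nonunit divisors pick one minimal for divisibility (it exists by finiteness and is irreducible), factor it out, and note that the cofactor has strictly fewer nonassociate divisors, so induction finishes. This is exactly the paper's (two-line) argument. Your strategy of exhibiting arbitrarily long chains of nonunit divisors below $a$ stalls precisely because a general nonunit of $D$ need not have any irreducible divisor; it is the \emph{finiteness} of the divisor set that manufactures irreducible divisors for free, and you never invoke it. (Your subsequent deduction of RIDF from U-FF together with complete atomicity is fine, and is in fact more careful than the paper's ``it is clear.'')

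Your forward direction is essentially correct but takes a different route: the paper shows that the atomic submonoid $A(D)$ is an FFM by applying \cite[Theorem 5.1]{AAZ90} (atomic $+$ IDF $\Rightarrow$ FF), whereas your antichain-plus-Dickson argument reproves that finiteness from scratch, which is more self-contained. However, watch where complete atomicity enters. As written, your Dickson argument uses only the RIDF hypothesis and counts only factorizations into atoms; since the paper exhibits a monoid that is RIDF but not U-FF (Example~\ref{g}), finiteness of atomic factorizations alone cannot be the whole content of the U-FF property --- one must also control arbitrary factorizations and divisors of $a$. That is exactly what complete atomicity buys, and what the paper proves: every nonunit factor in \emph{any} factorization $a=y_1\cdots y_m$ is atomic, hence an associate of a product $q_1^{e_1}\cdots q_k^{e_k}$, so all factorizations of $a$ are governed by your exponent-vector bookkeeping. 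Your parenthetical about powers $q_i^{m}\mid a$ and bounded lengths does not capture this point (boundedness is already subsumed by the antichain argument) and should be replaced by the observation above.
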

\begin{proof}
	It is clear that an U-FFD is a RIDF-domain. Let $x$ be an atomic element of $D$, and let $y \in D \setminus U(D)$ and $z \in D$ be such that $x = yz$. Since $D$ has the U-FF property, $x$ has only finitely many nonassociate factorizations (resp., divisors) in $D$, 
and hence $y$ does as well. In particular, $y$ is also atomic in $D$. Thus, $D$ is a sub-atomic domain. 
	\smallskip
	
	For the converse implication, assume that $D$ is a sub-atomic RIDF-domain. Then the atomic submonoid $A(D)$ of $D$ is a FFM since $D$ is a RIDF-domain. Let $x\in D$ be an atomic element. Then there exists a nonempty finite of nonassociate irreducible divisors of $x$ in $D$, say $a_1,\ldots,a_n$, where $n\in \N$. Note that every factorization of $x$ in $D$ has the form $x=ua_1^{e_1}\cdots a_n^{e_n}$ where $u\in U(D)$ and $ e_i\in \Z_+$ for each $ i=1,\ldots,n$. Indeed, let $x=y_1y_2\cdots y_m$, where $0\neq y_j\in D$ and $m\in \N$, be a factorization of $x$ in $D$. For $ j=1,\ldots,m$, if $y_j\notin U(D)$ then $y_j$ is atomic in $D$ since $D$ is sub-atomic.  But every irreducible divisor of $y_j$ is also an irreducible divisor of $x$ in $D$. Thus, $y_j=u_ja_1^{e_{1,j}}\cdots a_n^{e_{n,j}}$ where $u_j\in U(D)$ and $ e_{i,j}\in \Z_+$ for every $i=1,\ldots,n$ and $j=1,\ldots,m$. Consequently, $$y_1y_2\cdots y_m=\left( \prod_{j=1}^m u_j\right) \prod_{i=1}^n  a_i^{\sum_{j=1}^m e_{i,j}} .$$ 
Therefore, every factorization (divisor) of $x$, in $D$, lies in $A(D)$. Since $A(D)$ is a FFM, $x$ must have a finite  number of nonassociate factorisations in $D$. 
\end{proof}

\begin{Df}
Let $D$ be an integral domain and let $0\neq a\in D$. We say that $a$ is a \emph{Furstenberg element} of $D$ if $a$ has  at least an irreducible divisor in $D$.  We say that $D$ is a \emph{restricted-Furstenberg} domain  if every nonunit divisor of an atomic element of $D$ is  Furstenberg. 
\end{Df}
 An atomic (resp., a Furstenberg) domain is obviously a restricted-Furstenberg domain. However, the converse is not true  in general (for example, consider an antimatter domain). On the other hand, there are restricted-Furstenberg domains that are not atomic domains.
\begin{Ep}
Let $R$ be an atomic domain with quotient field $K$. Assume that $R$ is not an MCD-domain.  Let $X$ and $Y$ be two  algebraically independent indeterminates over $K$, and consider the polynomial ring $D:=R[X,Y]$. Since $R$ is not an MCD-domain, it follows from \cite[Theorem 1.4]{AAZ90} that $D$ is not atomic. We claim that $D$ is a restricted-Furstenberg domain. To see this, let $f$ be an atomic element of $D$, and let $g\in D\setminus U(D)$ such that $f=gh$ for some $h\in D$. If $g\in R$, then $g$ is atomic because $R$ is an atomic domain. Since $R$ is a divisor-closed subring of $D$, $g$ is also atomic in $D$, and so it has an irreducible divisor in $R$. If $g\in D\setminus R$ such that $g$ is primitive over $R$,  then $g$ is atomic in $R$ by the degree argument. In particular, $g$  has an irreducible divisor in $R$. Otherwise, $g=ag'$ for some $a\in R\setminus U(R)$ and $g'\in D\setminus R$. Since $a$ is atomic in $R$, $a$ has an irreducible divisor $b$ in $R$. Thus, $b$ is also an irreducible divisor of $g$ in $D$. Consequently, $g$ has at least an irreducible divisor in $D$, and hence $D$ is a restricted-Furstenberg domain.
\end{Ep} 

It is clear that every sub-atomic domain is restricted-Furstenberg. We next provide an example of a domain that is restricted-Furstenberg but fails to be sub-atomic.

\begin{example}{\cite[Example~6]{L19}}
 Consider the integral domain $D=\Z+X\Z+X^2\Q[X]$. Let $0\neq f(X)\in D\setminus \{-1,1\}$. If the first nonzero coefficient of $f(X)$ is an integer, then $f(X)$ is atomic in $D$ by \cite[Example~6]{L19}. Otherwise, it is clear that $f$ will be divisible by any prime number. Hence, every nonzero nonunit element of $D$ has at least an irreducible divisor in $D$. Therefore,  $D$ is a (restricted-) Furstenberg domain. However, $D$ is not a sub-atomic domain. To prove this, let us consider the element $f(X)=a_mX^m+\cdots+a_nX^n\in D$ such that  $m <n$ and $a_m\notin \Z$. Since the first nonzero coefficient of $f$ is not an integer, the argument noted above implies that $f(X)$ is not atomic in $D$. Now, let $g(X)=\frac{1}{a_m^2}X^2f(X)$. Then $g(X)$ is atomic in $D$, but its nonunit divisor $f(X)$ is not. Consequently, $D$ is a restricted-Furstenberg domain but not sub-atomic.
\end{example}

Following Sheldon \cite{Sh71}, an integral domain $D$ is called \emph{Archimedean} if $\cap_{n\in \N} \;a^n D=(0)$ for every nonunit $a\in D$. 

\begin{Pro} \label{xr}
	For an Archimedean domain $D$, the following conditions are equivalent.
	\begin{enumerate}
	\item $D$ is a U-FFD.		
		\smallskip
				\smallskip	
	\item $D$ is a restricted-Furstenberg RIDF-domain.		
	\end{enumerate}
\end{Pro}
\begin{proof}
$(1)\Rightarrow (2)$ Assume that $D$ is a U-FFD. By Proposition \ref{P1dd}, $D$ is a sub-atomic RIDF-domain, and hence it is a restricted-Furstenberg RIDF-domain. 
\smallskip

$(2)\Rightarrow (1)$ By Proposition \ref{P1dd} we need only show that $D$ is a sub-atomic domain. Thus,   let $x$ be an arbitrary atomic element of $D$ and let $y$ be a nonunit divisor of $x$ in $D$, say $x=yz$ for some $z\in D$. Since $D$ is restricted-Furstenberg, there exists $a_1\in \mathcal{A}(D)$ such that $a_1$ divides $y$ in $D$. Since  $ \bigcap_{n \in \N} a_1^n D = (0)$, there exists $n_1\in \N$ such that $a_1^{n_1}$ divides $y$, say $y=a_1^{n_1}y_1$ with $y_1\in D$,  and $a_1$ does not divide $y_1$. If $y_1$ is a unit of $D$, then we are done. Otherwise, as before, we obtain $a_2\in \mathcal{A}(D)$ and $n_2\in \N$ such that $y_1=a_2^{n_2}y_2$ for some $y_2\in D$ and  $a_2$ does not divide $y_2$. Thus, $y=a_1^{n_1}a_2^{n_2}y_2$. Following the same method, we can obtain $a_1,\ldots,a_m\in \mathcal{A}(D)$ and $n_1,\ldots,n_m\in \N$ such that $y=a_1^{n_1}\cdots a_m^{n_m}y_m$ for some $y_m\in D$. This process must eventually stop (i.e., $y_m\in U(D)$ for some $m$) because $y$ has only finitely many nonassociate irreducible divisors. Therefore, $y$ is an atomic element of $D$.
\end{proof}

Next, we consider the sub-atomic property under the $D+M$ construction. Let $T$ be an integral domain that can be written in the from $T = K + M$, where  $K$ is a subfield of $T$ and $M$ is a nonzero maximal ideal of $T$. Let $D$ be a subring of $K$ and $R = D + M$. We now investigate how the sub-atomic property behaves under the $D+M$ construction.

\begin{Th} \label{Th1}
	Let $T$ be an integral domain of the form $K+M$, where  $K$ is a subfield of $T$ and $M$ is a nonzero maximal ideal of $T$.  Let $D$ subring of $K$ and $R = D + M$. Then the following statements hold.
\begin{enumerate}
\item Assume that $D$ is a field.  Then  $R$ is a sub-atomic domain if and only if $T$ is a sub-atomic domain.
\smallskip

\item Assume that $D$ is not a field. If $D$ is atomic  and $T$ is sub-atomic, then  $R$ is a sub-atomic domain.
\end{enumerate} 
\end{Th}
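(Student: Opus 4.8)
The plan is to set up a short dictionary relating factorizations in $R$, in $T$, and in $D$, and then run divisibility arguments for atomic elements on top of it. Write $\pi\colon T\to T/M\cong K$ for the canonical projection, so $\pi(R)=D$; since $T=K\oplus M$ as an abelian group, every $t\in T\setminus M$ may be written $t=\pi(t)(1+m)$ with $\pi(t)\in K^{\times}\subseteq U(T)$ and $1+m\in 1+M$. The facts I would record first (all by elementary manipulation of this decomposition, using that $D$ is a domain; cf.\ the analysis of the $D+M$ construction in \cite{AAZ90}) are: \emph{(a)} $U(R)=U(D)\cdot\bigl(U(T)\cap(1+M)\bigr)$, so $U(R)\subseteq U(T)$, and $U(R)=U(T)\cap R$ precisely when $D$ is a field; \emph{(b)} $\mathcal A(T)\cap(1+M)=\mathcal A(R)\cap(1+M)$, and every atom of $R$ (resp.\ of $T$) is associate in $R$ (resp.\ in $T$) to an atom lying in $M\cup(1+M)$; \emph{(c)} if $D$ is not a field then $\mathcal A(R)\cap M=\varnothing$ --- since $a=d\cdot(d^{-1}a)$ is a nontrivial $R$-factorization of any $a\in M$ for a nonzero nonunit $d\in D$ --- while $\mathcal A(D)\subseteq\mathcal A(R)$, so every $x\in A(R)$ is, up to a unit of $R$, a product $d_{0}v_{1}\cdots v_{l}$ with $d_{0}\in A(D)\cup\{1\}$ and $v_{1},\dots,v_{l}\in\mathcal A(T)\cap(1+M)$; in particular $\pi(x)\neq0$; \emph{(d)} if $D$ is a field then moreover $\mathcal A(R)\cap M=\mathcal A(T)\cap M$, whence $\mathcal A(R)=\mathcal A(T)\cap R\subseteq\mathcal A(T)$ and, consequently, $A(R)=A(T)\cap R$.

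For statement~(1), so $D$ is a field, the crux is the identity $A(R)=A(T)\cap R$ in~(d). The inclusion $\subseteq$ follows from $\mathcal A(R)\subseteq\mathcal A(T)$; for $\supseteq$, write $x\in A(T)\cap R$ as $x=b_{1}\cdots b_{s}$ with $b_{i}\in\mathcal A(T)$, factor each $b_{i}=\lambda_{i}c_{i}$ with $\lambda_{i}\in U(T)$ and $c_{i}\in\bigl(\mathcal A(T)\cap M\bigr)\cup\bigl(\mathcal A(T)\cap(1+M)\bigr)=\bigl(\mathcal A(R)\cap M\bigr)\cup\bigl(\mathcal A(R)\cap(1+M)\bigr)$, and absorb $\lambda:=\prod_{i}\lambda_{i}\in K^{\times}$ into one $c_{i}$: if some $c_{i}\in M$ then $\lambda c_{i}\in K^{\times}M=M\subseteq R$ is again an atom of $R$; if every $c_{i}\in 1+M$ then $\pi(x)=\lambda$, and since $\pi(x)\in D$ while $\lambda\in K^{\times}$ this forces $\lambda\in D^{\times}\subseteq U(R)$, so again $\lambda c_{1}\in R$ is an atom of $R$. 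Granting $A(R)=A(T)\cap R$ and $U(R)=U(T)\cap R$, both directions of~(1) are short. If $T$ is completely atomic and $y\in R\setminus U(R)$ divides $x\in A(R)$ in $R$, then $y\in T\setminus U(T)$ divides $x\in A(T)$ in $T$, so $y\in A(T)\cap R=A(R)$. Conversely, if $R$ is completely atomic and $y\in T\setminus U(T)$ divides an atomic $x\in A(T)$ in $T$, rescale $x$ to an associate $c\in A(R)$ and $y$ to an associate $y'\in R\setminus U(R)$ with $y'\mid c$ in $T$; write $c=y'w$, note that if $w\in M$ then already $y'\mid c$ in $R$, and otherwise replace $c$ by $c'':=\pi(w)^{-1}c=y'\bigl(\pi(w)^{-1}w\bigr)\in A(T)\cap R=A(R)$, which still satisfies $y'\mid c''$ in $R$. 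In either case $y'$ is a nonunit divisor in $R$ of an atomic element of $R$, so $y'\in A(R)\subseteq A(T)$, and $y\sim_{T}y'$ is atomic in $T$.

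For statement~(2), so $D$ is not a field, $D$ atomic, and $T$ completely atomic, take $x\in A(R)$ and $y\in R\setminus U(R)$ with $x=yz$. By~(c), $\pi(x)\neq 0$, so $d_{y}:=\pi(y)\in D\setminus\{0\}$ and $y=d_{y}w_{y}$ with $w_{y}\in 1+M$; since $d_{y}$ divides $\pi(x)$ in $D$ and $D$ is atomic, $d_{y}$ is a unit of $D$ or atomic in $D$, hence (by~(c)) a unit of $R$ or atomic in $R$. For $w_{y}$: if $w_{y}\in U(R)$ then $y\sim_{R}d_{y}$ is atomic in $R$ (as $y$ is a nonunit); otherwise $w_{y}\notin U(T)$, and here $l\geq1$ in the presentation $x=ud_{0}v_{1}\cdots v_{l}$ of~(c) (else $x=ud_{0}\in U(T)$ would force $w_{y}\in U(T)$), so $x\sim_{T}v_{1}\cdots v_{l}\in A(T)$ (as $u,d_{0}\in U(T)$); since $w_{y}\mid x$ in $R$, the complete atomicity of $T$ gives $w_{y}\in A(T)$, and writing $w_{y}=b_{1}\cdots b_{s}$ with $b_{i}\in\mathcal A(T)$, the equality $\pi(w_{y})=1$ forces every $b_{i}\notin M$ and $\prod_{i}\pi(b_{i})=1$, so $w_{y}$ is a product of atoms of $T$ lying in $1+M=\mathcal A(R)\cap(1+M)$, i.e.\ $w_{y}\in A(R)$. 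In every case $y=d_{y}w_{y}$ is a product of elements of $R$ each of which is a unit or atomic in $R$, so $y\in A(R)$.

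The main obstacle is the dictionary (a)--(d), and within it the asymmetry caused by $U(R)$ being strictly smaller than $U(T)\cap R$ when $D$ is not a field: this is exactly what makes $\mathcal A(R)\cap M$ empty there, so that atomic elements of $R$ avoid $M$ and split cleanly into a $D$-part controlled by the atomicity of $D$ and a $(1+M)$-part controlled by the complete atomicity of $T$, and it is also the reason~(2) is only one-directional while~(1) is an equivalence. The one genuinely delicate manoeuvre, used repeatedly above, is the rescaling that moves a $K^{\times}$-scalar onto an atom of $T$ without leaving $R$: it works because $K^{\times}M=M\subseteq R$ and because $\pi$ pins the leftover scalar down to $D^{\times}$ as soon as no $M$-atom is available to absorb it. Once (a)--(d) are established, the remaining steps are routine.
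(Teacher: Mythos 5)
Your proof is correct and follows essentially the same route as the paper's: the associate normal forms $m$ and $1+m$, the transfer of irreducibility of such elements between $R$ and $T$, and, for part (2), the splitting of a divisor into its $D$-part (handled by atomicity of $D$) and its $(1+M)$-part (handled by complete atomicity of $T$ together with the normalization $\prod_i \pi(b_i)=1$), with the added care of checking that $x$ really is atomic and a nonunit in $T$ before invoking complete atomicity of $T$. The only blemish is the second assertion of your fact \emph{(b)} — that every atom of $R$ is associate in $R$ to an atom lying in $M\cup(1+M)$ — which fails when $D$ is not a field (e.g., $2$ in $\Z+X\Q[X]$); but you never invoke it in that case, relying on \emph{(c)} instead, so nothing breaks.
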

\begin{proof} 
(1) Note that each element of $R$ (resp., $T$) is associated in $R$ (resp., $T$) to an element of the form $m$ or $1+m$ for some $m\in M$. Moreover, one can easily check, by \cite[Lemma~1.5(2)]{CMZ},  that each of these elements is irreducible in $R$ if and only if it is irreducible in $T$. Then the result follows immediately. 
\smallskip

 (2) Assume that $D$ is atomic  and that $T$ is  sub-atomic. Let $a$ be an atomic element of $R$, say $a=a_1\cdots a_n$, where each $a_i\in R$ irreducible. Since $D$ is not a field,  no element of $M$ is irreducible in $R$. In fact, for any nonzero nonunit $d \in D$ and any $m \in M$, we can write $m = d \cdot (d^{-1}m)$, and since $d^{-1}m \in M$, this expresses $m$ as a product of two nonunits of $R$. Thus, each $a_i\notin M$, and hence $a=d+m$ for some $0\neq d\in D$ and $m\in M$. Moreover, every divisor of $a$ in $R$ is of this form. Now, let $b:=d'(1+m')$, where $0\neq d' \in D$ and $m '\in M$, be a nonunit divisor of $a$ in $R$. Since $D$ is atomic, $d'$ is atomic in $D$, and hence in $R$, provided $d'\notin U(D)$. To conclude our proof, we need to show that $1+m'$ is atomic in $R$ provided $(1+m')\notin U(R)$. Thus, suppose that $1+m'\notin U(R)$. Then  $1+m'$ is a nonunit divisor of $a$ in $T$. Since $T$ is a sub-atomic domain, $1+m'$ is atomic in $T$, say $1+m'=x_1\cdots x_l$, where each $x_i\in T$ irreducible. Note that each $x_i$ is of the form $k_i(1+m_i)$ for some $0\neq k_i\in K$ and $m_i\in M$. Thus $\prod_{i=1}^l k_i=1$, and so $1+m'=\prod_{i=1}^l (1+m_i)$. Furthermore, \cite[Lemma~1.5(2)]{CMZ} implies that $1+m_i$ is irreducible in $R$, for every $i=1,\ldots,l$. Therefore, $1+m'$ is atomic in $R$. Consequently, $R$ is a sub-atomic domain.   
\end{proof}
\begin{rk}
Under the assumption of Theorem~\ref{Th1}, assume that $D$ is not a field. Note that the only possible atomic elements of $R$ are those of the form $d + m$, where $0 \neq d \in D$ and $m \in M$. Indeed, if some $m' \in M$ were atomic in $R$, then it would have at least one irreducible divisor $m_1 \in M$, which is impossible since $M$ contains no atoms of $R$. In particular, we cannot use an argument similar to that used in the proof of Theorem~\ref{Th1} to show that the sub-atomic property of $R$ implies that of $T$. However, if $R$ is a sub-atomic domain, then $D$ is a also sub-atomic domain. Indeed, let $d$ be an atomic element of $D$ and let $d_1 \notin U(D)$ such that $d=d_1d_2$ for some $d_2\in D$. Since $\mathcal{A}(D)\subseteq \mathcal{A}(R)$, $d$ is atomic in $R$. Since $R$ is sub-atomic and $d_1 \notin U(R)$ , $d_1$ is atomic in $R$, say $d_1=\prod_{i=1}^n (r_i+m_i)$, where each $r_i+m_i\in R$ irreducible. Thus $d_1=\prod_{i=1}^n r_i$. Note that each $r_i$ is irreducible in $D$, since otherwise $r_i=u_iv_i$ for some $u_i,v_i \notin U(D)$ with $r_i+m_i=u_i(v_i+u_i^{-1}m_i)$, which is impossible because $u_i,(v_i+u_i^{-1}m_i)\notin U(R)$. Therefore, $d_1$ is atomic in $D$, and hence $D$ is a sub-atomic domain.

\end{rk}

\begin{Co}\label{c1}
 Let $D$ be an integral domain with quotient field $K$, and let $L$ be a field extension of $K$. Let $R = D + X L[X]$ and $R' = D + X L \ldb X \rdb$.
 \begin{enumerate}
\item Assume that $D$ is a field.  Then  $R$ and $R'$ are  sub-atomic domains.
\smallskip

\item Assume that $D$ is not a field. If $D$ is atomic, then  $R$ and $R'$ are  sub-atomic domains.
\end{enumerate}
\end{Co}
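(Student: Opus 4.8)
The plan is to exhibit both $R$ and $R'$ as instances of the $D+M$ construction analyzed in Theorem~\ref{Th1} and then quote that theorem. First I would set $T := L[X]$ and note that $T = L + M$ with $M := X L[X]$, where $L$ (viewed inside $T$ as the constant polynomials) plays the role of the subfield ``$K$'' in Theorem~\ref{Th1}; since $T/M \cong L$ is a field, $M$ is a nonzero maximal ideal of $T$. Because $D \subseteq K \subseteq L$, the domain $D$ is a subring of $L$, and one checks directly that $D + M = \{\, f \in L[X] : f(0) \in D \,\} = R$. The identical discussion, with $T := L\ldb X\rdb$, $M := X L\ldb X\rdb$, and $T/M \cong L$, realizes $R' = D + X L\ldb X\rdb$ as a $D+M$ construction as well.

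Next I would observe that the ``top'' ring $T$ is completely atomic in both cases: $L[X]$ is a principal ideal domain (as $L$ is a field), hence a UFD, hence atomic; and $L\ldb X\rdb$ is a discrete valuation ring, hence again a PID and atomic. Since every atomic domain is completely atomic, $T$ is completely atomic in either situation.

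With these ingredients in hand, part~(1) is immediate from Theorem~\ref{Th1}(1): when $D$ is a field, $R$ (resp.\ $R'$) is completely atomic if and only if $T$ is, and $T$ has just been shown to be completely atomic. For part~(2), when $D$ is not a field the hypotheses of Theorem~\ref{Th1}(2) read ``$D$ atomic and $T$ completely atomic,'' both of which hold by assumption and by the previous paragraph, so $R$ (resp.\ $R'$) is completely atomic. I do not anticipate any serious difficulty here; the only point deserving a moment's care is the bookkeeping that identifies the literal rings $D + X L[X]$ and $D + X L\ldb X\rdb$ with bona fide $D+M$ constructions (checking that $L$ is a subfield of $T$ containing $D$ and that $X L[X]$, resp.\ $X L\ldb X\rdb$, is a nonzero maximal ideal), after which the corollary drops out of Theorem~\ref{Th1}.
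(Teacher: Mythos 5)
Your proof is correct and follows essentially the same route as the paper: both realize $R$ and $R'$ as $D+M$ constructions with $T=L[X]$ (resp.\ $T'=L\ldb X\rdb$), note that $T$ is a UFD and hence completely atomic, and then invoke Theorem~\ref{Th1}. The extra bookkeeping you carry out (verifying that $XL[X]$ and $XL\ldb X\rdb$ are nonzero maximal ideals and that $D+M$ is literally $R$, resp.\ $R'$) is implicit in the paper's one-line argument.
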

\begin{proof} Follows from Theorem ~\ref{Th1} since  $T=L[X]$ (resp., $T'=L \ldb X \rdb$) is a UFD, and hence it is  a sub-atomic domain. 
\end{proof}

\begin{Ep}\label{EP2}
Consider the integral domain $R = \Z + X\Q[X]$. Then, it follows from Corollary~\ref{c1}(2) that $R$ is a sub-atomic domain. However, $R$ is not atomic since $\Z$ is not a field, see \cite[Proposition~1.2(a)]{AAZ90}. For more details, we note that the only atomic element of $R$ are those element $f\in R\setminus \{-1,1\}$ such that $f(0)\neq 0$. Thus, all such elements---and therefore their nonunit divisors---are atomic in $R$ by the degree argument and the fact that $\Z$ is atomic.  On the other hand, for every $f\in R$ such that $f(0)= 0$,  $f=p(p^{-1}f)$ with $p^{-1}f\in R\setminus U(R)$ for every prime number $p$. In particular, every element  $f$ of this from has at least a factor that is divisible by all prime numbers, and so $R$ cannot be atomic. 
\end{Ep}

We conclude this section by examining the behavior of the sub-atomic property under localization. A saturated multiplicative subset $S$ of an integral domain $D$ is called \emph{splitting} if each $x \in D$ can be written as $x = as$ for some $a \in D$ and $s \in S$ such that $aD \cap tD = a tD$ for all $t \in S$. Note that the extension $D \subseteq D_S$ is inert when $S$ is a splitting multiplicative set, see \cite[Proposition~1.5]{AAZ92}. Also, note that $U(D_S)=S^{-1}S$.

Let $D$ be an  integral domain and  $\Gamma \subseteq D^*$ be a subset. The multiplicative set $S$ generated by $\Gamma$ is defined as follows
$$S = \{u t_1 t_2 \dots t_n \mid u \in U(D), \ t_i \in \Gamma, \ n \in \Z_+ \}.$$
Note that,  since $S$ is closed under the associative multiplication of $D$ and contains the multiplicative identity, the set $S$ with the operation of multiplication from $D$ is a multiplicative submonoid of the multiplicative monoid $D^*$ of $D$.

\begin{Pro} \label{loc}
	Let $D$ be an integral domain, and let $S$ be a splitting multiplicative subset of $D$ generated by primes. Then $D$ is a sub-atomic domain (resp., RIDF-domain, U-FFD)  if and only if $D_S$ is a sub-atomic domain (resp., RIDF-domain, U-FFD).
\end{Pro}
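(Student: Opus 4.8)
The plan is to first dispose of the U-FFD case by Proposition~\ref{P1dd}: since $D$ (resp.\ $D_S$) is a U-FFD exactly when it is simultaneously completely atomic and RIDF, it suffices to prove the equivalence separately for the completely atomic property and for the RIDF property. Everything will be built on three facts about a splitting set $S$ generated by primes. (i) Since $S$ is splitting, every $x\in D^*$ factors as $x=sx'$ with $s\in S$ and $x'$ having no nonunit divisor in $S$; I will call such an $x'$ \emph{$S$-free}, and use that this decomposition is unique up to units, that $x\sim x'$ in $D_S$ (because $S\subseteq U(D_S)$), and that $U(D_S)\cap D=S$, so a nonunit of $D$ stays a nonunit in $D_S$ precisely when it is not in $S$. (ii) The extension $D\subseteq D_S$ is inert by \cite[Proposition~1.5]{AAZ92}: a factorization in $D_S$ of an element of $D$ can, after multiplying its factors by a suitable unit of $D_S$ and its inverse, be brought inside $D$. (iii) The technical core: if $q$ is $S$-free and $q|_Dsb$ for some $s\in S$, $b\in D$, then $q|_Db$; I will prove this by stripping the prime factors of $s$ one at a time, each step invoking the identity $qD\cap pD=qpD$ of a splitting set for a prime $p$, which holds for every $S$-free $q$.

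From (iii) I will extract the two consequences that do the real work: (a) for $S$-free $a,b$ one has $a\sim_D b$ if and only if $a\sim_{D_S}b$, i.e.\ the natural map from $D$-associate classes of $S$-free elements to $D_S$-associate classes is injective; and (b) every atom of $D$ is either associate to a prime of $S$ (hence a unit of $D_S$) or is $S$-free, in which case it stays an atom of $D_S$, while conversely every atom of $D_S$ is $D_S$-associate to an $S$-free atom of $D$, and an $S$-free element of $D$ that is an atom of $D_S$ is already an atom of $D$. Combining (ii), (iii), (a), (b) with a short induction I will also record the basic equivalence: for $z\in D^*\setminus S$ one has ``$z$ atomic in $D$'' $\iff$ ``$z'$ atomic in $D$'' $\iff$ ``$z$ atomic in $D_S$'', whereas every element of $S$ is atomic in $D$ and a unit of $D_S$.

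The completely atomic equivalence then follows quickly. Forward: given atomic $\alpha\in D_S$ and a nonunit divisor $\beta$ of it, inertness lets me replace $\alpha,\beta$ by $D_S$-associates lying in $D$ with cofactor in $D$; then $\alpha\notin S$, so $\alpha$ is atomic in $D$, so $\beta$ is atomic in $D$, hence atomic in $D_S$. Backward: given atomic $x\in D$ and a nonunit divisor $y$, either $y\in S$ and $y$ is a product of primes hence atomic in $D$, or $y\notin S$, whence $x\notin S$, so $x$ is atomic in $D_S$, so $y$ is atomic in $D_S$, so its $S$-free part $y'$ is atomic in $D$, and $y=sy'$ with $s\in S$ a product of primes is atomic in $D$. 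The RIDF equivalence has the same shape but uses (a) to control associate classes. Forward: take atomic $\alpha\in D_S$, reduced to an $S$-free element of $D$, hence atomic in $D$; then $\alpha$ has finitely many nonassociate-in-$D$ irreducible divisors, all $S$-free, and any atom $\pi$ of $D_S$ dividing $\alpha$ is $D_S$-associate, via inertness, to an $S$-free atom of $D$ that actually divides $\alpha$ in $D$, so by (a) distinct $D_S$-classes force distinct $D$-classes, giving finiteness. Backward: take atomic $x\in D$; the prime divisors of $x$ are each associate to a factor of a fixed atomic factorization of $x$, so finitely many are nonassociate, while any $S$-free atom dividing $x$ divides $x'$ by (iii), and if $x\notin S$ then $x'$ is atomic in $D_S$, so RIDF of $D_S$ together with (a) bounds those as well (and if $x\in S$ there are none). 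Hence $x$ has finitely many nonassociate irreducible divisors in $D$.

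I expect the main obstacle to be fact (iii) and its corollary (a): the remainder is careful but routine bookkeeping with inertness and the $S$-free decomposition, whereas passing between $D$-associate and $D_S$-associate classes of irreducible divisors is exactly what makes the RIDF direction work, and it is precisely there that both hypotheses are genuinely used --- ``splitting'' for the identity $qD\cap pD=qpD$, and ``generated by primes'' so that $S$-parts are themselves atomic.
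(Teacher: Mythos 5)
Your proposal is correct and follows essentially the same route as the paper: reduce via the splitting decomposition to $S$-free elements, use inertness of $D\subseteq D_S$ to move factorizations between the two rings, and transfer atoms back and forth (your facts (iii), (a), (b) are exactly the content of the machinery from \cite{AAZ92} that the paper invokes). The only difference is one of self-containment: you prove the transfer lemmas and write out the RIDF case in full, whereas the paper cites \cite[Lemma~1.1]{AAZ92} and delegates the RIDF argument to the IDF case in \cite[Theorems~2.4 and~3.1]{AAZ92}.
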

\begin{proof}
 Assume that $D$ is a sub-atomic domain. Let $x$ be an atomic element of $D_S$, say $x=x_1\cdots x_n$, where each $x_i\in D_S$ irreducible. We may suppose that $x\in D$ such that $xD \cap sD = xsD$ for every
  $s\in S$. In particular, no nonunit of $S$ divides $x$ in $D$. Thus, we may assume that each $x_i\in D$ and it is irreducible in $D$. Hence, $x$ is atomic in $D$. 
 Now, let $y$ be a nonunit divisor of $x$, say $x=yz$ for some $z\in D_S$. Then, since $D \subseteq D_S$ is inert,  there exists $u\in U(D_S)$ such that $uy,u^{-1}z\in D$.  Note that $x$ and $uy$  are nonunits in $D$. Since $D$ is  sub-atomic, $uy$ is atomic in $D$. Then it follows from  \cite[Lemma~1.1]{AAZ92} that $uy$ is atomic in $D_S$. So $y$ is also atomic in $D_S$. Therefore, $D_S$ is a sub-atomic domain.\smallskip

  Conversely, assume that $D_S$ is a sub-atomic domain. Let $x$ be an atomic element of $D$, say $x=x_1\cdots x_n$, where each $x_i\in D$ irreducible.  Let $y$ be a nonunit divisor of $x$ in $D$, say $x=yz$ for some $z\in D$.  We may assume that no nonunit element of $S$ divides $y$ is $D$. Indeed, we may write $y=y's$ for some $y'\in D$ and $s\in S$ such that $y'D \cap tD = y'tD$ for every $t\in S$. Since $s$ is a finite product of primes in $D$ or $s\in U(D)$, $s$ is atomic in $D$, and then we need only show that $y'$ is atomic in $D$. Note that no nonunit element of $S$ divides $y'$ is $D$. Thus, $x\notin S$, and hence $x$ can be written as $x = ar$ for some $a \in D\setminus U(D)$ and $r \in S$ such that no prime $p\in S$ divides $a$ in $D$.  Thus, $a=x_{i_1}\ldots x_{i_m}$, where $m\leq n$ and $x_{i_j}\in \{x_1,\ldots, x_n\}$. Since each $x_{i_j}\notin S$, $j=1,\ldots,m$, \cite[Lemma~1.1]{AAZ92} implies that each $x_{i_j}$ is irreducible in $D_S$. Thus, $a$ is atomic in $D_S$. Then it follows that $x$ is also atomic in $D_S$. Since $D_S$ is sub-atomic and $y\notin U(D_S)$ is a divisor of $x$ in $D_S$, $y$ is atomic in $D_S$. Then, by a similar argument as above (the direct implication), one can easily check that $y$ is atomic in $D$. Consequently, $D$ is a sub-atomic domain.
 \smallskip
 
The RIDF-domain case is similar to the IDF-domain case, see  \cite[Theorem~3.1]{AAZ92} and \cite[Theorem~2.4(a)]{AAZ92}.  Proposition~\ref{P1dd} in tandem with the previous cases implies The U-FFD case.
\end{proof}

\section{Ascent on PSP-domains and GL-domains}\label{sec4}
This section is devoted to investigating the ascent of the sub-atomic,  RIDF, and U-FF properties  to polynomial rings. As an application of our results, we provide some examples showing that the reverse implications in Diagram~\ref{d1} do not hold in general.

It is well-known that certain factorization properties, such as atomicity or the IDF-property, may or may not be preserved when passing from a domain $D$ to the polynomial ring $D[X]$. Understanding the behavior of the sub-atomic (resp., RIDF, U-FF) property under such extensions is therefore  a natural problem to consider. We begin this section with the following result, which states that the sub-atomic property ascends in the class of GL-domains when the domain $D$ contains no atoms. Before, we recall that a PSP-domain is a GL-domain, and a GL-domain is an AP-domain \cite[Proposition 3.2]{AZ07}. Then, by \cite[Lemma 2.2]{CZ},  an AP-domain (resp., a PSP-domain, a GL-domain)  is  a U-UFD. In particular, a GL-domain is a  sub-atomic (resp, a RIDF-domain, a U-FFD).

\begin{Pro} \label{L1}
	Let $D$ be an antimatter domain. The following conditions are equivalent.
	\begin{enumerate}
		\item $D$ is a GL-domain.
		\smallskip
		\item $D[X]$ is a sub-atomic domain. 
	\end{enumerate}
\end{Pro}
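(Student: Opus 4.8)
The plan is to reduce both implications to a single purely ring-theoretic observation about $D[X]$, valid over an arbitrary domain: \emph{a primitive polynomial of positive degree is always atomic in $D[X]$, and in fact each of its factorizations into nonunits has length at most its degree}. I would establish this first. The two facts behind it are elementary: (i) every divisor in $D[X]$ of a primitive polynomial is itself primitive, since a common divisor of the coefficients of a factor divides every coefficient of the product; and (ii) a nonzero constant is primitive exactly when it is a unit. Together these show that in any factorization $f=q_1\cdots q_k$ of a primitive $f$ into nonunits, each $q_i$ is a nonunit primitive polynomial, hence $\deg q_i\ge 1$, so $k\le \deg q_1+\cdots+\deg q_k=\deg f$; thus proper refinements must terminate and $f$ is a finite product of irreducibles. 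Alongside this I would record the companion remark that, because $D$ is antimatter, a polynomial irreducible in $D[X]$ must have positive degree (a nonunit constant is irreducible in $D[X]$ iff it is irreducible in $D$, which is impossible) and must be a primitive polynomial (otherwise it factors as a nonunit constant times a polynomial of the same, positive, degree).

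For $(1)\Rightarrow(2)$ the argument will then be short. I would take an atomic $F=p_1\cdots p_n$ in $D[X]$, with each $p_i$ irreducible, together with a nonunit divisor $G$ of $F$. The companion remark makes every $p_i$ a primitive polynomial, so $F$ is a product of primitives; since $D$ is a GL-domain, $F$ is primitive. By (i) the divisor $G$ is primitive, and being a nonunit it has positive degree, so the observation above gives that $G$ is atomic. Hence $D[X]$ is completely atomic.

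For $(2)\Rightarrow(1)$ I would argue by contraposition. If $D$ is not a GL-domain, I would pick primitive $f,g\in D[X]$ with $fg$ not primitive. Then $f$ and $g$ are nonunits, because a unit times a primitive polynomial stays primitive, hence they have positive degree and hence are atomic by the observation; multiplying these factorizations together shows $fg$ is atomic. On the other hand $fg$ is not primitive, so some nonunit $d\in D$ divides all of its coefficients, i.e.\ $d\mid fg$ in $D[X]$; and $d$, being a nonzero nonunit of the antimatter domain $D$, is not atomic in $D[X]$, since by a degree count any factorization of $d$ into irreducibles of $D[X]$ would be a factorization of $d$ into irreducibles of $D$, of which there are none. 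So $fg$ is an atomic element of $D[X]$ admitting a non-atomic nonunit divisor, and $D[X]$ fails to be completely atomic.

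The step I expect to be the conceptual crux — and the one most easily overcomplicated — is $(2)\Rightarrow(1)$. The natural first instinct is to manufacture a ``minimal'' failure of Gauss's lemma in which the two primitive factors are themselves irreducible in $D[X]$, but that route is awkward because regrouping factors keeps the total degree unchanged. The observation that primitive polynomials are automatically atomic sidesteps it entirely: it turns an arbitrary failure pair $(f,g)$ directly into an atomic element $fg$ whose content $d$ exhibits the failure of complete atomicity. Everything else is routine manipulation of the definition of primitivity and of degrees, with the antimatter hypothesis entering in exactly two places — to force irreducibles of $D[X]$ to be primitive polynomials in $(1)\Rightarrow(2)$, and to force the content $d$ to be non-atomic in $(2)\Rightarrow(1)$.
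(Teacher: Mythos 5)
Your proof is correct and follows essentially the same route as the paper's: both directions rest on the GL property forcing atomic elements to be primitive, the degree argument making primitive polynomials of positive degree atomic, and the antimatter hypothesis ruling out atomic constants in $D[X]$. Your $(2)\Rightarrow(1)$ is merely the contrapositive of the paper's direct contradiction argument, with the nonunit content $d$ of $fg$ playing the same role as the paper's constant divisor $a$.
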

\begin{proof} First note that, since $D$ is antimatter, no atomic element of $D[X]$ can lie in $D$. 

(1)$\Rightarrow $(2)  Assume that $D$ is a GL-domain.  Let $f\in D[X]\setminus D$ be an atomic element of $D[X]$. Then $f$ is a finite product of primitive (irreducible) element of $D[X]$, and hence it  is primitive  since  $D$ is a GL-domain.  Hence, it follows that every nonunit divisor of $f$ in $D[X]$ is also primitive. Thus, by the degree argument, every nonunit divisor of $f$ is atomic  in $D[X]$. Therefore, $D[X]$ is a sub-atomic domain.
\smallskip

(2)$\Rightarrow $(1) Assume that $D[X]$ is a sub-atomic domain. Let $f$ and $g$ be two  primitive polynomials in $D[X]$. We may assume that $f$ and $g$ are non-constants. Since $f$ (resp., $g$) is primitive, it has an atomic factorization in $D[X]$. Thus, $h:=fg$ is atomic in $D[X]$. Now, suppose that $h=ah_1$ for some $a\in D$ and $h_1\in D[X]$. If $a\notin U(D)$,  since $a$ divides $h$ and $D[X]$ is a sub-atomic domain, then $a$ must be atomic in $D[X]$. But  $D$ is  divisor-closed in  $D[X]$, so $a$ is atomic in $D$. This contradicts the fact that $D$ is an antimatter domain.  Thus $a$ is a unit of $D$, and therefore $h$ is primitive  in $D[X]$. Consequently, $D$ is a GL-domain.   
\end{proof}

As previously noted, every GL-domain is antimatter. However, there exist antimatter domains that are not GL-domains; see, for instance, \cite[Example~2.10]{AS75}. Consequently, in light of Proposition~\ref{L1}, the sub-atomic property does not ascend to polynomial rings in general.

\begin{Ep} \label{EP3}
 Let $K$ be an algebraically closed (or a real closed) field, and let $G$ be a subgroup of the additive group $\Q$ which is not  cyclic. Consider the group algebra $R:=K[G]$. Then, by \cite[Corollary 3]{BEGraz}, $R$ is an antimatter domain. On the other hand,   $R$ is a GL-domain since it is a GCD-domain \cite[Theorem~5.2]{GP74}. Therefore,  it follows from Proposition~\ref{L1} that the polynomial ring $D:=R[X]$ is  a sub-atomic domain which is not antimatter. Note that $D$ is an IDF-domain, and hence a RIDF-domain. However, it is clear that $D$ is not an atomic domain.
\end{Ep}

If $D$ is not an antimatter domain the implication (2)$\Rightarrow $(1)  in Proposition~\ref{L1} may fail. 
\begin{Ep}
Let $K$ be a field and let $X$ and $Y$ be algebraically independent variables over $K$. Consider the numerical semigroup algebra $D := K[Y^2, Y^3] \subset K[Y].$ We claim that $D[X]$ is a sub-atomic domain, but $D$ is not a GL-domain.

To see that $D[X]$ is sub-atomic, observe that $D$ is a finitely generated $K$-algebra, hence Noetherian. Therefore, $D[X]$ is also Noetherian, and hence atomic. Consequently, $D[X]$ is a sub-atomic domain. On the other hand,  $f:= (Y^2X- Y^3) (Y^2X+ Y^3)\in D[X]$, which is a product of two primitive polynomials in $D[X]$, is not primitive since $f=Y^4(X^2-Y^2)$. Then, it follows that $D$ is not a GL-domain.
\end{Ep}

It is well-known that the properties of being antimatter and being MCD-finite are  independent. In particular, Eftekhari and Khorsandi proved the existence of integral domains that are antimatter yet fail to be MCD-finite. More precisely, in \cite[Theorem~2.5]{EK}, employing a modified variant of a powerful construction originally due to Roitman \cite{Rot}, the authors show that for any integral domain $D$ and any unit-closed subset $S \subseteq D$, one can construct a domain $R$ that is not MCD-finite and whose set of atoms is precisely $S$. By taking $S=\emptyset$, one obtains a domain without atoms, i.e., an antimatter domain. Moreover, by \cite[Theorem 2.1]{EK}, the polynomial ring $R[X]$ is not an IDF-domain. 

We now present an example demonstrating that the properties of being a GL-domain, being an antimatter domain, and being non-MCD-finite can all occur simultaneously in a single integral domain.
  Thus, we then apply Proposition~\ref{L1} to get a sub-atomic domain which is not an IDF-domain.  
\begin{Ep}{\cite[Section 4]{GoPa}}\label{Ep4}

Let $\F_2$ denote the finite field with two elements.  Let
$
\mathcal{X}=\{X,Y,Z,T_1,T_2,T_3,\dots\}
$ be an infinite set of algebraically independent indeterminates over $\F_2$. Consider the integral domain
$$
    R_0 := \F_2 \bigg[ \big\{ X^\alpha, Y^\alpha, Z^\alpha : \alpha \in \Q_{\ge 0} \big\} \cup \Big \{\prod_{i=1}^{\infty} T_i^{\alpha_i} : \alpha_i \in \mathbb{Q}_{\ge 0} \ \forall \, i \in \mathbb{N} \Big\} \bigg].
$$
For the set of monomials
\[
  J := \bigg\{ \prod_{i=1}^{\infty} T_i^{\alpha_i} : \alpha_i \in \Q_0^+ \ \forall \, i \in \mathbb{N} \bigg\},
\]  
consider the following subring of $R_0$:
$$
    R :=\F_2\big[\big\{X^\alpha, Y^\alpha, Z^\alpha : \alpha \in \Q_{\ge 0} \big\} \cup 
     \big\{X^{\alpha}T, Y^\alpha T, Z^\alpha T : (\alpha,T) \in \Q_{>0} \times J \big\} \big].$$
It is clear that $R$ is an antimatter domain, and by \cite{GoPa} it is a GL-domain which is not MCD-finite. Moreover, \cite[Theorem 2.1]{EK} implies that $D:=R[x]$, the polynomial ring over $R$ in the indeterminate $x$, is not an IDF-domain. Also, observe that $D$ is not atomic. On the other hand, it follows from Proposition~\ref{L1} that the polynomial ring $D$ is a sub-atomic domain. 
\end{Ep}

Recall that a GL-domain is a sub-atomic domain (resp, a RIDF-domain, a U-FFD). In \cite{DG}, the authors show that the U-FF property does not
 ascend to polynomial extensions. Next we prove that the sub-atomic (resp., RIDF, U-FF) property ascends to polynomial rings in the class of PSP-domains.

\begin{Th}\label{Uff}

Let $D$ be an integral domain. Then the following statement hold.
	\begin{enumerate}
		\item If $D$ is a GL-domain, then $D[X]$ is a sub-atomic domain.
		\smallskip
		\item If $D$ is a PSP-domain, then $D[X]$ is a RIDF-domain (resp., U-FFD). 
	\end{enumerate}
\end{Th}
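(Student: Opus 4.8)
The plan is to reduce everything to an understanding of the atomic elements of $D[X]$ and of their divisors. The underlying observations, valid over an arbitrary domain, are: every atom of $D$ is still an atom of $D[X]$ (a factorization in $D[X]$ of a constant element must use constant factors, so $\mathcal{A}(D)\subseteq\mathcal{A}(D[X])$); any divisor in $D[X]$ of a primitive polynomial is again primitive (a nonunit constant dividing the divisor would divide the original); and a non-constant primitive polynomial $g$ is a bounded-factorization element of $D[X]$, since in any factorization of $g$ into nonunits each factor is primitive, hence non-constant, so the length is at most $\deg g$, and in particular such a $g$ is atomic in $D[X]$. From this, for a GL-domain $D$ one gets the characterization: $f\in D[X]$ is atomic if and only if $f=cg$ with $c\in D$ atomic or a unit and $g\in D[X]$ primitive. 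Indeed, in an atomic factorization $f=f_1\cdots f_n$ each $f_i$ is either an atom of $D$ or a non-constant primitive polynomial; collect the constant factors into $c$ and the rest into $g$, and note $g$, a product of primitives, is primitive because $D$ is a GL-domain. The converse uses the three observations above together with $\mathcal{A}(D)\subseteq\mathcal{A}(D[X])$.

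The technical core of both parts is a decomposition lemma: if $D$ is a GL-domain, $c\in D$ is atomic or a unit, $g\in D[X]$ is primitive, and $d\mid cg$ in $D[X]$, then $d=c'g'$ with $c'$ a unit times a sub-product of a fixed atomic factorization $c=p_1\cdots p_r$ (so $c'\mid c$ in $D$ and $c'$ is atomic or a unit) and $g'\in D[X]$ primitive with $g'\mid g$. I would prove this by induction on $r$. The case $r=0$ is "divisors of primitive polynomials are primitive". For the inductive step, write $cg=de$; since a GL-domain is an AP-domain, $p_1$ is prime in $D$, hence prime in $D[X]$ (because $D[X]/p_1D[X]\cong (D/p_1D)[X]$ is a domain), so $p_1\mid d$ or $p_1\mid e$; cancelling $p_1$ and applying the induction hypothesis with $c$ replaced by $p_2\cdots p_r$ gives the conclusion. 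Part (1) then follows at once: for $f$ atomic write $f=cg$, and for a nonunit divisor $d$ of $f$ write $d=c'g'$ by the lemma; $c'$ is atomic or a unit in $D$, hence in $D[X]$, while $g'$ is primitive, hence atomic in $D[X]$ if non-constant and a unit if constant, so $d$ is atomic. (This recovers Proposition~\ref{L1} when $D$ is antimatter.)

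For part (2), recall that a PSP-domain is a GL-domain, so $D[X]$ is completely atomic by part (1); by Proposition~\ref{P1dd} it then suffices to prove that $D[X]$ is a RIDF-domain, and the U-FFD conclusion follows. So let $f\in D[X]$ be atomic, written $f=cg$ as above. By the decomposition lemma every irreducible divisor $q$ of $f$ is either constant, in which case $q$ is associate to one of the finitely many prime factors of $c$, or non-constant, in which case $q$ is primitive and $q\mid g$; hence the non-constant irreducible divisors of $f$ are exactly the irreducible divisors of the primitive polynomial $g$. It remains to show $g$ has finitely many nonassociate irreducible divisors in $D[X]$. Over $K=\operatorname{Frac}(D)$ the polynomial $g$ has only finitely many divisors up to $K[X]$-associates since $K[X]$ is a PID, so it is enough to check that two primitive polynomials of $D[X]$ that are associate in $K[X]$ are associate in $D[X]$: writing $g''=(a/b)g'$ with $a,b\in D\setminus\{0\}$ gives $bA_{g''}=aA_{g'}$, and passing to $v$-closures, together with the fact that over a PSP-domain a primitive polynomial is super-primitive (so $A_{g'}$ and $A_{g''}$ have $v$-closure $D$), yields $aD=bD$, i.e. $a/b\in U(D)$. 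This produces the required injection into a finite set, so $D[X]$ is RIDF and hence a U-FFD.

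The preliminary facts and the characterization of atomic elements are routine. The first real step is the decomposition lemma, whose only substantive ingredient is the passage "GL-domain $\Rightarrow$ AP-domain $\Rightarrow$ atoms prime in $D[X]$"; this is short once set up, but one must be careful to feed the correct smaller $c$ into the induction so that the count of constant irreducible divisors of $f$ in part (2) really is finite. The main obstacle is the finiteness of the irreducible divisors of a primitive $g$ in part (2): it rests entirely on primitive polynomials over a PSP-domain being super-primitive, which is what allows $K[X]$-associates to be pushed down to $D[X]$-associates. This is exactly the point at which a general GL-domain is insufficient, which is why part (2) requires the stronger PSP-hypothesis rather than merely GL.
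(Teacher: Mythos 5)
Your proof is correct and follows essentially the same strategy as the paper's: both rest on the decomposition $f=cg$ of an atomic polynomial into a GCD $c$ of its coefficients (a product of primes, since a GL-domain is an AP-domain) and a primitive part $g$, which is the paper's Claim~1, and both then handle constant and non-constant divisors separately. Your explicit decomposition lemma for an arbitrary divisor of $cg$, proved by induction on the prime factors of $c$ using that atoms of an AP-domain stay prime in $D[X]$, is a slightly more systematic packaging of the paper's Claims~1 and~2, but it is the same idea. The one genuinely different step is in part~(2): where the paper cites \cite[Lemma~3.1(3)]{GZ22} for the finiteness of nonassociate non-constant irreducible divisors, you prove it directly by passing to $K[X]$ (a PID, so finitely many divisors up to $K[X]$-associates) and then using super-primitivity of primitive polynomials over a PSP-domain, via $bA_{g''}=aA_{g'}$ and $v$-closures, to descend $K[X]$-associateness to $D[X]$-associateness; this makes the argument self-contained and isolates exactly where the PSP hypothesis (rather than GL) is needed.
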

\begin{proof}  Assume that $D$ is a GL-domain. Let $f\in D[X]\setminus D$ be an atomic element of $D[X]$. First, let us prove the two following claims.
\smallskip

    \noindent \textsc{Claim 1.} The set of coefficients of $f$ (resp.,  any atomic polynomial in $D[X]$) has a unique MCD element $c$ up to associates (i.e., a GCD). Moreover, $c$ has a prime factorization in $D$. 
    \smallskip

    \noindent \textsc{Proof of Claim 1.} 
 If $f$ is primitive then we are done. Otherwise,  since $f$ is atomic in $D[X]$, there exists $n,t\in \Z_+$, $A:=\{a_1,\ldots, a_n\}\subseteq \mathcal{A}(D)$, and $F:=\{f_1\ldots,f_t\}\subseteq \mathcal{A}(D[X])\setminus D$ such that  $$f=\left( \prod_{i=1}^n a_i\right) \left(\prod_{i=1}^t f_i\right) .$$
 Since $f$ is not a constant, $F\neq \emptyset$. On the other hand, since each $f_i$ is primitive (irreducible) in $D[X]$ and $D$ is a GL-domain, the product $h:=\prod_{i=1}^t f_i$ is primitive in $D[X]$. Then, the fact that $f$ is not primitive implies that $A\neq \emptyset$. Set $c:=\prod_{i=1}^n a_i$. Therefore, $c$ is an MCD of the coefficients of $f$ because $f=ch$ and $h$ is primitive. It remains to show that $c$ is  unique up to associates. Let $c'$ be an MCD of the coefficients of $f$. Then, there exists a primitive polynomial $h'\in D[X]$ such that $f=c'h'$. Since $D$ is a GL-domain, it is an AP-domain. Thus, $a_i$ is prime in $D$, and hence in $D[X]$, for every $i=1,\ldots,n$. Then the equality $ch=c'h'$ together with the fact that $h'$ is primitive implies that the prime element $a_i$ divides $c'$ in $D$,   for every $i=1,\ldots,n$. Hence, $c$ divides $c'$ in $D$, which implies that $c$ and $c'$ are associates in $D$.  Therefore, the claim is proven.

\smallskip

    \noindent \textsc{Claim 2.} Every divisor $d\in D\setminus U(D)$ of $f$ has a prime factorization in $D[X]$.  \smallskip
    
\noindent \textsc{Proof of Claim 2.} Let   $d\in D\setminus U(D)$  such that $f=df'$ for some $f'\in D[X]$.  Then $d$ is a common divisor of the set of coefficients of $f$ in $D$. Hence,  by Claim 1, $d$ divides $c$ in $D$.  Since $c$ has a (unique) prime factorization in $D$, $d$  has also a prime factorization in $D$, and hence in $ D[X]$.  The claim is then established. 
 \smallskip
 
At this point, we have the tools needed to prove our statements. 
     \smallskip
     
$(1)$ Assume that $D$ is a GL-domain.  Let $f$ be an atomic element of $D[X]$.  Since $D$ is a sub-atomic domain and a divisor-closed subring of $D[X]$, we may assume that $f\in D[X]\setminus D$. Now, let us prove that every nonunit divisor of $f$ is atomic in $D[X]$.  Let $g_1\in D[X]\setminus U(D)$ such that $f=g_1g_2$ for some $g_2\in D[X]$.  If $g_1\in D$, the result follows from Claim 2. If $g_1$ is  primitive, by the degree argument, then $g_1$ has an atomic factorization in $D[X]$. Otherwise, $g_1=bg_0$ for some $b\in D\setminus U(D)$ and $g_0\in D[X]$. Then,  by Claim 1, $b$ can be chosen so that $g_0$ is a primitive polynomial because $b$ is a divisor of $c$ in $D$. In particular, $b$ and $g_0$ are both atomic in $D[X]$. This implies that $g_1$ is atomic in $D[X]$ as desired. Therefore, $D[X]$ is a sub-atomic domain.
\smallskip

    $(2)$ Assume that $D$ is a PSP-domain. Let $f$ be an atomic element of $D[X]$.  Since $D$ is a RIDF-domain and a divisor-closed subring of $D[X]$, we may assume that $f\in D[X]\setminus D$. Note that $D$ is a GL-domain since it is a PSP-domain.  We need to show that $f$ has only a finite number of nonassociate irreducible divisors in $D[X]$.  Since $D$ is a PSP-domain,  $f$ has only a finite number of nonassociate irreducible divisors in $D[X]$ which are non-constants. Indeed,  each non-constant irreducible divisor of $f$ is  primitive, and hence super-primitive, and then \cite[Lemma 3.1(3)]{GZ22} implies that $f$ has at most finitely many such divisors. Let $a_0\in D$ an irreducible (prime) divisor of $f$ in $D[X]$. As previously seen,   $a_0$ divides $c$ in $D$, and hence $a_0$ is associate to some prime divisor $a_i$, where $i=1,\ldots,n$,  of $c$ in $D$. Therefore, $f$ has only finitely many nonassociate constant irreducible divisors in $D[X]$. Thus, $f$ has the RIDF-property in $D[X]$. Consequently, $D[X]$ is a RIDF-domain. To conclude the proof, we note that Proposition \ref{P1dd}, together  with $(1)$ and $(2)$,  proves that $D[X]$ is a U-FFD.    
\end{proof}
 
 Next, we apply Theorem~\ref{Uff} to construct a class of RIDF-domains that are not IDF-domains. 
This construction further illustrates that the RIDF-property is strictly weaker than the IDF-property.

\begin{Ep} \label{gg}

Let $R$ be a PSP-domain which is not IDF. For instance, consider the Puiseux algebra   $R:=K[\Q_+]$, where $K=\Q(\sqrt[2^n]{q}:n\in \N)$ with $q>2$ a prime number.  Note that $R$ is a GCD-domain, and so it is a PSP-domain.  On the other hand, $R$ is not an IDF-domain by \cite[Example 3.8]{BE22}. 

Now, consider the polynomial ring $D:=R[X]$. Then, by Theorem \ref{Uff}, $D$ is  a RIDF-domain. However, $D$ is not an IDF-domain since $R$ is not. Furthermore, observe that $D$ is not atomic, as $R$ contains elements (e.g.,  monomials and cyclotomic polynomials) that have no irreducible divisors; see \cite{BE22}. 
\end{Ep}

\smallskip

It is well known that every PSP-domain is a GL-domain \cite[Proposition 3.2]{AZ07}. Also, a PSP-domain is an 
MCD-finite domain \cite[Lemma 3.4]{BEJCA25}. This naturally raises the question of whether the RIDF-property (resp., U-FF property) ascends within these classes of domains.

 \begin{Qs} Does the RIDF-property (resp., U-FF property)  ascend in the classes of the GL-domains and MCD-finite domains?
 \end{Qs}



\end{document}